\newcommand\al{\alpha}
\newcommand\Aut{\mathrm{Aut}}
\newtheorem{theorem}{Theorem}
\newtheorem{corollary}{Corollary}
\newtheorem{example}{Example}
\begin{document}
\title{Enumerating AG-monoids algebraically}
\author{Muhammad Shah}
\email{mshahmaths@gmail.com}
\address{DEPARTMENT OF MATHEMATICS, QUAID-I-AZAM UNIVERSITY, ISLAMABAD,
PAKISTAN}
\author{S. Shpectorov}
\email{sergeys@for.mat.bham.ac.uk}
\address{SCHOOL OF MATHEMATICS, UNIVERSITY OF BIRMINGHAM, UK.}
\email{s.shpectorov@bham.ac.uk}
\keywords{AG-monoids, monoids, algebraically, GAP, order 8.}
\begin{abstract}
An AG-monoid is an AG-groupoid (a groupoid satisfying the identity called left invertive law $(xy)z=(zy)x$) and having a left identiy. In this paper we enumerate AG-monoids algebraically and then implement them in GAP to compute them computationally.
\end{abstract}
\subjclass{}
\maketitle

An AG-monoid is an AG-groupoid (a groupoid satisfying the identity called left invertive law $(xy)z=(zy)x$) and having a left identity. AG-groupoids are extinsively studied for example see  \cite{Enumeration},\cite{Smarandache},\cite{classes}. On the other hand monoids are associative semigroups. we are finding AG-monoids from commutative monoids and then implement them in GAP to compute them computationally.

First we provide the rigorous foundation for the counting of AG-monoids.
It is based on the following result.

\begin{theorem} \label{construction}
Suppose $(S,+)$ is a commutative monoid and suppose $\al\in\Aut(S)$ satisfies $\al^2=1$.
Let the product be defined on $S$ by $a\cdot b:=\al(a)+b$. Then $(S,\cdot)$ is an AG-monoid.
Furthermore, every AG-monoid can be obtained in this way from a unique pair $(S,\al)$.
\end{theorem}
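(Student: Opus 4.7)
The plan is to prove the two directions separately and then dispatch uniqueness. For the forward implication, I would fix $(S,+)$ and $\al$ as in the hypothesis and expand both sides of the left invertive law directly: $(a\cdot b)\cdot c=\al(\al(a)+b)+c=a+\al(b)+c$, using $\al^2=1$ and the fact that $\al$ preserves $+$; similarly $(c\cdot b)\cdot a=c+\al(b)+a$, and these agree by commutativity of $+$. The additive identity $0$ then serves as a left identity for $\cdot$: $0\cdot a=\al(0)+a=a$, since every automorphism fixes $0$.

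For the converse, given an AG-monoid $(S,\cdot)$ with left identity $e$, I would set $\al(a):=a\cdot e$ and $a+b:=\al(a)\cdot b$. A key preliminary is the identity $\al(xy)=yx$, obtained from the left invertive law as $(xy)\cdot e=(ey)\cdot x=yx$. Specializing $y=e$ gives $\al^2(a)=(ae)e=ea=a$, so $\al$ is an involution. Commutativity $a+b=b+a$ is then just the left invertive law with $y=e$: $(ae)b=(be)a$. The candidate additive identity is $e$ itself, since $\al(e)=e\cdot e=e$, giving $e+a=e\cdot a=a$. For the additivity of $\al$ and associativity of $+$, I would first derive the medial law $(xy)(zw)=(xz)(yw)$ by iterating the left invertive law, and then the left-alternate law $x(yz)=y(xz)$ as a consequence (inserting $e$ on the left and applying medial). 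These two identities reduce both $(a+b)+c$ and $a+(b+c)$ to the same expression $\al(b)\cdot(\al(a)\cdot c)$, and also yield $\al(a+b)=b\cdot\al(a)=a\cdot\al(b)=\al(a)+\al(b)$. The original product is recovered as $a\cdot b=\al(\al(a))\cdot b=\al(a)+b$.

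For uniqueness, I would first note that the left identity of an AG-monoid is unique: if $e,f$ are both left identities, applying the left invertive law with $x=y=e$, $z=f$ yields $(ee)f=(fe)e$, which simplifies to $f=e$. Hence for any pair $(+,\al)$ producing the given $(S,\cdot)$, the additive identity $0$ must coincide with the left identity $e$ (since $0\cdot a=\al(0)+a=a$ shows $0$ is a left identity for $\cdot$), and this forces $\al(a)=a\cdot e$ by substituting $b=e$ in $a\cdot b=\al(a)+b$, and then $a+b=\al(a)\cdot b$ upon applying $\al^2=1$. The main obstacle is the converse direction, specifically the associativity of $+$ and the homomorphism property of $\al$; both require the medial and left-alternate identities, which are not immediate from the left invertive law but follow from it once a left identity is present.
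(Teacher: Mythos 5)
Your proposal is correct and follows essentially the same route as the paper: the same direct computation for the forward direction, the same definitions $\al(a)=a\cdot e$ and $a+b=\al(a)\cdot b$ for the converse, and the same reliance on the medial law and the derived identity $x(yz)=y(xz)$ to obtain associativity of $+$ and additivity of $\al$ (your auxiliary identity $\al(xy)=yx$ is just a convenient packaging of the same computations). The one place you go beyond the paper is the uniqueness claim, where you explicitly show that any pair $(+,\al)$ producing $(S,\cdot)$ is forced to coincide with the recovered one (via uniqueness of the left identity and substitution of $b=e$), whereas the paper only asserts that the recovery is ``canonical''; your version is the more complete argument.
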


\begin{proof}
First, suppose that $(S,+)$ is a commutative monoid and $\al\in\Aut(S)$ with $\al^2=1$.
Let us check that the new product $\cdot$ satisfies the left invertive law.

Let $x,y,z\in S$. Then $(x\cdot y)\cdot z=\al(\al(x)+y)+z=\al^2(x)+\al(y)+z=x+\al(y)+z$, since $\al^2=1$.
Since also $(z\cdot y)\cdot x=z+\al(y)+x=x+\al(y)+z$ by commutativity, we conclude that $(x\cdot y)\cdot z=
(z\cdot y)\cdot x$. In order to show that $(S,\cdot)$ is an AG-monoid it remains to check that $0$ is a left
identity. Taking $x\in S$, we get $0\cdot x=\al(0)+x=0+x=x$, and the first claim is proven.

For the second claim we need to show that, given an arbitrary AG-monoid $(S,\cdot)$ (where the left
identity is denoted by $0$), we can
recover from it a suitable commutative monoid $(S,+)$ and an involutive automorphism $\al$.

Define $\al:S\to S$ by $\al(x)=x\cdot e$. Furthermore, define addition on $S$ by
$x+y:=\al(x)\cdot y$. We need to see that $(S,+)$ is a commutative monoid and that
$\al$ is an automorphism of it, as above.

We start by showing that $+$ satisfies commutativity. Using left invertive law, we have
$x+y=\al(x)\cdot y=(x\cdot 0)\cdot y=(y\cdot 0)\cdot x=\al(y)\cdot x=y+x$. For associativity, recall
that AG-monoids (being an AG-groupoid) satisfy the medial law $(a\cdot b)\cdot(c\cdot d)=
(a\cdot c)\cdot(b\cdot d)$, which implies, together with the left identity property, that
$a\cdot(b\cdot c)=b\cdot(a\cdot c)$ for all $a,b,c\in S$.

Now, using left invertive law and $a\cdot (b\cdot c)=b\cdot (a\cdot c)$, we get
$(x+y)+z=((x\cdot 0)\cdot y)+z=(((x\cdot 0)\cdot y)\cdot 0)\cdot z=(z\cdot 0)\cdot((x\cdot 0)\cdot y)
=(x\cdot 0)\cdot((z\cdot 0)\cdot y)=(x\cdot 0)\cdot((y\cdot 0)\cdot z)=x+((y\cdot 0)\cdot z)=
x+(y+z)$. Hence $(S,+)$ is associative. It remains to see that $0$ is the identity of $(S,+)$.
Indeed, $0+x=(0\cdot 0)\cdot x=0\cdot x=x$, since $0$ is the left identity of $(S,\cdot)$.
Thus, by commutativity of addition we have $x+0=0+x=x$. We have
shown that $(S,+)$ is a commutative monoid.

Turning to $\al$, notice that $\al^2(x)=(x\cdot 0)\cdot 0=(0\cdot 0)\cdot x=0\cdot x=x$ for every $x\in S$.
Therefore, $\al^2=1$, which in particular means that $\al$ is bijective. Also,
$\al(x+y)=(x+y)\cdot 0=((x\cdot 0)\cdot y)\cdot 0$. By the left invertive law, the latter is equal to
$(0\cdot y)\cdot(x\cdot 0)=y\cdot(x\cdot 0)$. On the other hand, $\al(x)+\al(y)=
((x\cdot 0)\cdot 0)\cdot(y\cdot 0)=((y\cdot 0)\cdot 0)\cdot(x\cdot 0)=((0\cdot 0)\cdot y)\cdot(x\cdot 0)=
(0\cdot y)\cdot(x\cdot 0)=y\cdot(x\cdot 0)$. Thus, $\al(x+y)=\al(x+\al(y)$, which shows that $\al$ is
an involutive automorphism of $(S,+)$.

Finally, $x\cdot y=\al^2(x)\cdot y=\al(x)+y$. This shows that $(S,\cdot)$ can be recovered from
$(S,+)$ in the prescribed way. Clearly, both $(S,+)$ and $\al$ were recovered from $(S,\cdot)$ in a
canonical way, which means that this pair is unique for $(S,\cdot)$.
\end{proof}

We now illustrate our construction with an example.

\begin{example}
We start with the following commutative monoid $S$:

\begin{center}
\begin{tabular}{l|llllll}
$+$ & $0$ & $1$ & $2$ & $3$ & $4$ & $5$ \\ \hline
$0$ & $0$ & $1$ & $2$ & $3$ & $4$ & $5$ \\
$1$ & $1$ & $5$ & $2$ & $3$ & $4$ & $0$ \\
$2$ & $2$ & $2$ & $2$ & $3$ & $3$ & $2$ \\
$3$ & $3$ & $3$ & $3$ & $3$ & $3$ & $3$ \\
$4$ & $4$ & $4$ & $3$ & $3$ & $4$ & $4$ \\
$5$ & $5$ & $0$ & $2$ & $3$ & $4$ & $1$%
\end{tabular}%
\end{center}
\end{example}

It can be checked that the permutation $\al=(1,5)(2,4)$ is an automorphism of this commutative monoid,
and it clearly has order two. Applying this $\al$ to $(S,+)$ we get the following non-associative
AG-monoid.

\begin{example}
\end{example}
\begin{center}
\begin{tabular}{l|llllll}%
$+$ & $0$ & $1$ & $2$ & $3$ & $4$ & $5$ \\ \hline
$0$ & $0$ & $1$ & $2$ & $3$ & $4$ & $5$ \\
$1$ & $5$ & $0$ & $2$ & $3$ & $4$ & $1$ \\
$2$ & $4$ & $4$ & $3$ & $3$ & $4$ & $4$ \\
$3$ & $3$ & $3$ & $3$ & $3$ & $3$ & $3$ \\
$4$ & $2$ & $2$ & $2$ & $3$ & $3$ & $2$ \\
$5$ & $1$ & $5$ & $2$ & $3$ & $4$ & $0$%
\end{tabular}%
\end{center}

In order to be able to count the number of non-isomorphic AG-monoids we need to make the uniqueness claim
in Theorem \ref{construction} even more precise, as follows.

\begin{theorem}
Suppose $(S,+)$ and $(S',+)$ are two commutative monoids and let $\al\in\Aut(S)$, $\al'\in\Aut(S')$
be their involutive automorphisms. Finally, let $(S,\cdot)$ (respectively, $(S',\cdot)$) be the AG-monoid
derived from $S$ and $\al$ (respectively, $S'$ and $\al'$). Then a mapping $\phi:S\to S'$ is an isomorphism
of $(S,\cdot)$ onto $(S',\cdot)$ if and only if $\phi$ is an isomorphism of $(S,+)$ onto $(S',+)$ and,
furthermore, $\phi\al=\al'\phi$.
\end{theorem}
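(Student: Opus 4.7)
The plan is to prove both implications using the recovery formulas established in Theorem \ref{construction}: namely, $\al(x)=x\cdot 0$ and $x+y=\al(x)\cdot y$ (and analogously for $S'$). Because every step in the proof is an equation in these formulas, almost all of the argument reduces to routine substitution. The only genuinely delicate point is to justify, in the forward direction, that $\phi$ maps the left identity $0$ of $(S,\cdot)$ to the left identity $0'$ of $(S',\cdot)$.

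To handle this subtlety, I would first observe that in an AG-monoid arising from our construction the left identity is unique. Indeed, if $e$ is a left identity then $e\cdot b=\al(e)+b=b$ for all $b$, so $\al(e)=0$, and since $\al$ is a bijection with $\al(0)=0$, we get $e=0$. Hence any AG-monoid isomorphism $\phi:(S,\cdot)\to(S',\cdot)$ must send $0$ to $0'$.

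For the forward direction, I would then compute $\al'(\phi(x))=\phi(x)\cdot 0'=\phi(x)\cdot\phi(0)=\phi(x\cdot 0)=\phi(\al(x))$, establishing $\phi\al=\al'\phi$. Using this, the fact that $\phi$ is an isomorphism of the additive monoids follows from $\phi(x+y)=\phi(\al(x)\cdot y)=\phi(\al(x))\cdot\phi(y)=\al'(\phi(x))\cdot\phi(y)=\phi(x)+\phi(y)$, where the final $+$ is the addition on $S'$; preservation of the identity $0$ is automatic from $\phi(0)=0'$, and bijectivity is inherited from $\phi$ being an AG-monoid isomorphism.

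For the reverse direction, assume $\phi:(S,+)\to(S',+)$ is a monoid isomorphism with $\phi\al=\al'\phi$. Then $\phi(x\cdot y)=\phi(\al(x)+y)=\phi(\al(x))+\phi(y)=\al'(\phi(x))+\phi(y)=\phi(x)\cdot\phi(y)$, so $\phi$ respects the AG-monoid product; bijectivity again transfers directly. The main obstacle is thus not any of the calculations, but rather the preliminary uniqueness-of-left-identity observation that legitimises the recovery of $\al$ from $(S,\cdot)$ alone; once that is in place, the theorem becomes a transparent restatement of the construction in Theorem \ref{construction}.
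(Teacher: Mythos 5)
Your proposal is correct and follows essentially the same route as the paper: both directions reduce to the recovery formulas $\al(x)=x\cdot 0$ and $x+y=\al(x)\cdot y$, with the key preliminary step being that $\phi(0)=0'$ via uniqueness of the left identity. The only cosmetic differences are that you justify that uniqueness through the constructed additive structure (whereas the paper attributes it directly to the left invertive law) and that you establish $\phi\al=\al'\phi$ first and then deduce additivity of $\phi$ from it, while the paper verifies the two conditions independently.
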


\begin{proof}
We first assume that $\phi$ is an isomorphism from $(S,\cdot)$ onto $(S',\cdot)$. Then, for $x,y\in S$, we
have $\phi(x)+\phi(y)=(\phi(x)\cdot 0')\cdot\phi(y)$, where $0'$ is the left identity in $S'$. Note that
the left invertive law implies that every AG-monoid has a unique left identity, which means that $0'=\phi(0)$,
where $0$ is the left identity of $S$. Therefore,
$(\phi(x)\cdot 0')\cdot\phi(y)=(\phi(x)\cdot\phi(0))\cdot\phi(y)=\phi((x\cdot 0)\cdot y)=\phi(x+y)$.
We have shown that $\phi$ is an isomorphism of $(S,+)$ onto $(S',+)$.

Also, $\phi(\al(x))=\phi(x\cdot 0)=\phi(x)\cdot\phi(0)=\phi(x)\cdot 0'=\al'(\phi(x))$. Since this is true
for all $x\in S$, we conclude that $\phi\al=\al'\phi$, as claimed.

Conversely, suppose that $\phi$ is an isomorphism of $(S,+)$ onto $(S',+)$ satisfying $\phi\al=\al'\phi$.
Then, first of all, $\phi(0)=0'$, as these are the unique identity elements in the respective commutative
monoids. Thus, $\phi(x\cdot y)=\phi(\al(x)+y)=\phi(\al(x))+\phi(y)=\al'(\phi(x))+\phi(y)=\phi(x)\cdot\phi(y)$,
proving that $\phi$ is an isomorphism of $(S,\cdot)$ onto $(S',\cdot)$.
\end{proof}

As a consequence, we immediately get the following.

\begin{corollary}
Suppose $(S,+)$ is a commutative monoid and $\al,\al'\in\Aut(S)$ satisfy $\al^2=1=(\al')^2$. Then the
AG-monoids obtained from $S$ and $\al$, and from $S$ and $\al'$ are isomorphic if and only if
$\al$ and $\al'$ are conjugate in $\Aut(S)$.
\end{corollary}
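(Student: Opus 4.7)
The plan is to derive this corollary directly from the preceding theorem by specializing to the case where the two commutative monoids coincide, i.e.\ $S' = S$ with the same operation $+$. In that setting, the preceding theorem asserts that the AG-monoids constructed from $(S,+,\al)$ and $(S,+,\al')$ are isomorphic if and only if there exists a bijection $\phi : S \to S$ that is an automorphism of $(S,+)$ and satisfies $\phi\al = \al'\phi$.

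Next, I would observe that the relation $\phi\al = \al'\phi$, for $\phi \in \Aut(S)$, is equivalent to $\al' = \phi\al\phi^{-1}$, which is by definition the statement that $\al$ and $\al'$ are conjugate in the group $\Aut(S)$. This yields both directions of the biconditional: an isomorphism of the AG-monoids produces, via the preceding theorem, an element $\phi \in \Aut(S)$ conjugating $\al$ to $\al'$; conversely, a conjugating element $\phi \in \Aut(S)$ is automatically an automorphism of $(S,+)$ satisfying $\phi\al = \al'\phi$, and so by the converse direction of the theorem it induces an isomorphism of the derived AG-monoids.

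I do not anticipate any serious obstacle, as the corollary is essentially a reformulation of the preceding theorem in the special case $S = S'$. The only point worth highlighting is that \emph{conjugate in $\Aut(S)$} carries exactly the conjugation condition furnished by the theorem, namely that the conjugating map is itself an automorphism of the additive monoid, rather than merely a bijection of $S$; once this is noted the argument reduces to a one-line translation between the two formulations.
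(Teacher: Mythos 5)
Your proposal is correct and is exactly the intended argument: the paper states the corollary as an immediate consequence of the preceding theorem and omits the proof, and your specialization to $S'=S$ together with the observation that $\phi\al=\al'\phi$ for $\phi\in\Aut(S)$ means $\al'=\phi\al\phi^{-1}$ is precisely that omitted one-line deduction.
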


We omit the proof. It is easy to see that an AG-group is associative if and only if it is commutative and if and
only if the left identity in it is a two sided identity. A further equivalent condition is that $\al=1$.
In particular, the number of non-associative AG-monoids obtainable from a particular commutative monoid $S$
is equal to the number of conjugacy classes of involutive (nonidentity) automorphisms in $\Aut(S)$.

The number of nonisomorphic commutative monoids is known up to order 10 \cite{Order10}. The list up to order 8 is available
in the SMALLSEMI package \cite{DM25} of GAP\cite{GAP}. Using functions from the AGGROUPOIDS package \cite{AGGROUPOIDS} of GAP\cite{GAP} we were able to apply the
above method to all those commutative monoids which resulted in the following table.

\begin{center}
{\tiny
\begin{table}[h] \label{tab1}
\begin{tabular}{|l|l|l|l|l|l|l|l|l|l|l|l|l|l|l|l|l|l|l|l|l|}
\hline
Order & $3$ & $4$ & $5$ & $6$ & $7$ & $8$ \\ \hline
Commutative monoids & $5$ & $19$ & $78$ & $421$ & $2637$ & $20486$ \\ \hline
Non-associative AG-monoids & $1$ & $6$ & $29$ & $188$ & $1359$ & 11386 \\ \hline
Total & $6$ & $25$ & $107$ & $609$ & $3996$ & $31872$ \\ \hline
\end{tabular}
\vskip .5cm
\caption{Number of AG-monoids of order $n$, $3\le n\le 8$}
\end{table}
}
\end{center}


\end{document}